\newtheorem{theorem}{Theorem}[section]
\newtheorem*{theorem*}{Theorem}
\newtheorem{lemma}[theorem]{Lemma}
\begin{document}

\title{Adaptive Voronoi-based Column Selection Methods for Interpretable Dimensionality Reduction}

\author[a,\orcidlink{0000-0002-3209-4705}]{Maria Emelianenko \footnote{E-mail: memelian@gmu.edu}}
\author[a,\orcidlink{guyoldaker4@gmail.com}]{Guy B. Oldaker IV\footnote{E-mail: goldaker@gmu.edu} }

\affil[a]{Department of Mathematical Sciences, George Mason University, 4400 University Dr, Fairfax, VA 22030}

\maketitle


\begin{abstract}
In data analysis, there continues to be a need for interpretable dimensionality reduction methods whereby instrinic meaning associated with the data is retained in the reduced space.  Standard approaches such as Principal Component Analysis (PCA) and the Singular Value Decomposition (SVD) fail at this task.  A popular alternative is the CUR decomposition. In an SVD-like manner, the CUR decomposition approximates a matrix $A \in \mathbb{R}^{m \times n}$ as $A \approx CUR$, where $C$ and $R$ are matrices whose columns and rows are selected from the original matrix \cite{goreinov1997theory}, \cite{mahoney2009cur}.  The difficulty in constructing a CUR decomposition is in determining which columns and rows to select when forming $C$ and $R$.  Current column/row selection algorithms, particularly those that rely on an SVD, become infeasible as the size of the data becomes large \cite{dong2021simpler}.  We address this problem by reducing the column/row selection problem to a collection of smaller sub-problems.  The basic idea is to first partition the rows/columns of a matrix, and then apply an existing selection algorithm on each piece; for illustration purposes we use the Discrete Empirical Interpolation Method (\textsf{DEIM}) \cite{sorensen2016deim}.  For the first task, we consider two existing algorithms that construct a Voronoi Tessellation (VT) of the rows and columns of a given matrix.  We then extend these methods to automatically adapt to the data.  The result is four data-driven row/column selection methods that are well-suited for parallelization, and compatible with nearly any existing column/row selection strategy.  Theory and numerical examples show the design to be competitive with the original \textsf{DEIM} routine.
\end{abstract}

\section{Introduction}

Machine learning and data analysis seek to distill the essential structure(s) within data.  With the prevalence of large, high-dimensional data sets, it is common practice to convert the data into a more manageable form.  A popular choice for this task is principle component analysis (PCA), where the data are projected onto a lower dimensional space that captures most of the variance \cite{hastie2009elements}, \cite{mohri2018foundations}.  This is done using a truncated form of the singular value decomposition (SVD) of the corresponding data matrix.  A drawback of this procedure is that the resulting transformed points are each linear combinations of potentially all of the singular vectors used in the projection.  Consequently, any physical meaning present in the original samples is lost \cite{mahoney2009cur}.  This phenomenon has motivated the development of tools that strive to maintain interpretability.  One such tool is the so-called CUR matrix decomposition, where a given matrix $A$ is approximated by the product of three matrices:  $C$, $U$, and $R$.  The matrices $C$ and $R$ are constructed using carefully selected columns and rows of $A$ and, in a manner reminiscent of the SVD, serve as approximate bases for the column and row spaces of $A$ respectively.  In addition, physical meaning is preserved as well as any intrinsic properties (e.g., sparsity, non-negativity).  The matrix $U$ is typically chosen to make $||A - CUR||_\gamma$ small, where $\gamma$ is usually taken to be $2$ or $F$.  

Determining the best way to select rows and columns for the CUR decomposition is an active field of research and has led to a rich array of algorithms.  These broadly range from classical approaches that rely on well-known matrix factorizations to sophisticated routines that employ data-driven probability distributions to select rows and columns.  However, the cost of many such algorithms becomes prohibitive when the size and dimension of the data are large \cite{dong2021simpler}.  In this article, we present four algorithms inspired by Voronoi Tessellation (VT) theory \cite{okabe2000spatial} that distribute the column selection task into a collection of smaller subtasks.  With the ability to be combined with any column-selection strategy, the algorithms use a data-driven approach to first select an optimal partition of the rows/columns.  We then apply an existing row/column selection algorithm to each piece of the partition and combine the results.  The Discrete Empirical Interpolation Method (\textsf{DEIM}) \cite{sorensen2016deim} is used in this article for presentation purposes.  Through parallelization, we expect the algorithms to be applicable to large, high-dimensional data sets, particularly those for which computation of an SVD is infeasible.  Theory and simulations using real and synthetic data show the algorithms to be competitive with the current state-of-the-art.  

We begin by discussing the Column-Subset Selection Problem (CSSP) \cite{boutsidis2009improved} and associated algorithms, including \textsf{DEIM}.  In the next section, we discuss the partitioning algorithms.  This includes a review of two existing approaches followed by our new modifications.  The third section introduces a post-processing routine.  This last is required to combine the \textsf{DEIM} solutions from each of the sets in the partition. With this done, we give the results of numerical experiments and a discussion of worst-case error bounds and complexity.  This is followed by a conclusion.





\section{The Column-Subset Selection Problem (CSSP)}
The primary task in constructing a CUR decomposition is the selection of the rows and columns used to form the matrices $C$ and $R$.  Since selecting rows from a matrix is equivalent to selecting columns from the transpose, we will hereafter focus on the latter. Given a matrix $A \in \mathbb{R}^{m \times n}$ with $\mbox{rank}(A) = \rho$, and a target rank, $0<r \le \rho,$ the goal of CSSP is to form $C \in \mathbb{R}^{m \times r}$ consisting of $r$ columns of $A$ that minimizes $$||(I - CC^\dagger)A||_\xi,\quad \xi \in \{2,F\},$$ over all possible $m \times r$ matrices $C$ whose columns are taken from $A$ ($C^\dagger$ denotes the Moore-Penrose pseudoinverse of the matrix $C$).

Given the inherent difficulty in solving this problem exactly \cite{shitov2017column}, most algorithms settle for a good approximation.  These can be broadly divided into two classes \cite{dong2021simpler}:  probabilistic and deterministic. Algorithms from the first class select columns using some kind of probability distribution \cite{drineas2006sampling}, \cite{drineas2006subspace}, \cite{drineas2008relative}, \cite{wang2013improving}.  For example, the algorithms in \cite{deshpande2006adaptive} select columns with probabilities proportional to their norms.  The authors in \cite{mahoney2009cur} construct so-called leverage scores.  Built using information from an SVD truncated to a desired target rank $r$, these scores allow one to sample columns that tend to have the most in common with the dominant $r$ left singular vectors. Algorithms in the second class typically use a classical matrix factorization to select columns.  For example, the LU factorization with partial pivoting (LUPP) \cite{trefethen2022numerical} and column-pivoted QR decompositions (CPQR) \cite{golub2013matrix} both select as columns the pivot elements that arise during execution.  The \textsf{DEIM} algorithm \cite{sorensen2016deim} also falls into this category.  Given a matrix $A \in \mathbb{R}^{m \times n}$ with rank $\rho$, a target rank $k \le \rho$, the \textsf{DEIM} algorithm selects columns from $A$ in an incremental fashion while simultaneously building an oblique projector.  This projector, which uses information from the top $r$ right singular vectors of $A$, removes the need to perform row operations to determine the pivot elements that will be returned by the algorithm.  

As the size of the data matrix becomes larger, some of the algorithms mentioned above become prohibitive \cite{dong2021simpler},\cite{drineas2008relative}. This is especially true for algorithms that rely on an SVD \cite{voronin2017efficient}; e.g., \textsf{DEIM} and leverage score based routines. The associated cost for computing an SVD on an $m \times n$ matrix $A$ is $\mathcal{O}(\min\{m^2n,m,n^2\})$.  For a matrix of the same size, the LUPP and CPQR routines have complexity $\mathcal{O}(m^2n)$.  These issues can be accounted for somewhat by left multiplying $A$ by a so-called random sketch, $\Gamma \in \mathbb{R}^{l \times m}$, where $l$ is set to be the desired target rank \cite{dong2021simpler}.  A popular choice for $\Gamma$ is to set the entries to be values drawn independently from a Gaussian distribution.  One then applies any of the algorithms previously discussed on the smaller matrix, $\Gamma A \in \mathbb{R}^{l \times n}$.  However, for very large data sets, one needs to take into account the construction of $\Gamma.$  

\subsection{Notation}
Before continuing, we introduce some helpful notation.  At times we will use Matlab \cite{MATLAB} notation; e.g., if $A \in \mathbb{R}^{m \times n}$ and $j$ is a positive integer, $A(:,j)$ and $A(j,:)$ denote the $j^{th}$ column and row of $A$ respectively.  Given a set $B \subset \mathbb{R}^m$ and a point $x \in \mathbb{R}^m$, we denote by $B - x$ the set 
$$B- x = \{ b - x \; | \; b \in B\}.$$
If instead $B\in \mathbb{R}^{m \times n}$ is a matrix and $x \in \mathbb{R}^m$, the matrix $B - x \in \mathbb{R}^{m \times n}$ is given by
$$B - x = [ B(:,1) - x \dots B(:,n) - x ].$$
We also let $\Omega_B$ denote the set of columns for a matrix $B$.  For a collection of matrices, $\{B_i\}_{i=1}^k$, with $B_i \in \mathbb{R}^{m \times n}$, we let $\mathsf{diag}(B_i)$ represent the $km \times kn$ matrix
$$\mathsf{diag}(B_i) = \left ( \begin{array}{ccc}
                      B_1& & \\
                       &\ddots&\\
                       & &B_k\\
                      \end{array} \right ).$$

For the vectors of all ones in $\mathbb{R}^m$ we write $\mathbf{1}_m$, and the identity matrix in $\mathbb{R}^{m \times m}$ is $I_m$.  We write $|V|$ to denote the cardinality of a set $V$, and for $a \in \mathbb{R}$, $\lfloor a \rfloor$ denotes the largest integer that does not exceed $a.$
Throughout, the data matrix under consideration will be $A \in \mathbb{R}^{m \times n}$ with $\mbox{rank}(A) = \rho$.  We let $0<k\le n$ represent the number of sets in a partition and $0<r<\rho$ be the target rank for the CSSP problem.

\section{Decomposing CSSP  via Column Partitioning}
In an effort to avoid the bottlenecks associated with the CSSP algorithms discussed above and extend their applicability to larger data sets, we propose to divide the problem into a collection of smaller sub-problems.  Given a matrix $A \in \mathbb{R}^{m \times n}$, our first task will be to determine an optimal partition of its columns.  This is accomplished by assigning the columns, $x \in \mathbb{R}^m$, of $A$ to disjoint sets, $\{V_i\}_{i=1}^k$, via the general rule:
$$x \in V_i \mbox{ if } D^2(x,z_i) < D^2(x,z_j),\quad i \neq j.$$
Here, $D: \mathbb{R}^{m} \times \mathbb{R}^m \rightarrow \mathbb{R}$ is a distortion measure (i.e., not necessarily a metric), and $z_i \in \mathbb{R}^m$ is a point associated with the set $V_i$ (the role of the $z_i$ will become clear in the following sections).  The size, $k$, of the partition is typically selected by the user, and the resulting sets $V_i$ form a Voronoi Tessellation \cite{okabe2000spatial} of the set of columns of $A$ (Hereafter, the $V_i$ will be referred to as Voronoi sets).  By varying the distortion measure, $D$, one induces different partitions.

It is by considering different distortion measures and objective functions that we arrive at four partitioning algorithms.  This diversity helps to address the fact that the definition of 'optimal partition' is problem dependent. We then apply an existing CSSP algorithm to each partition, in this case \textsf{DEIM}.  The selected columns from this last step are used to form the final result.  By combining the partitioning methods with \textsf{DEIM}, we will arrive at four column-selection algorithms, two of which are new and have the ability to adapt to the data.

\subsection{Review of CVOD}

The first partitioning algorithm we consider is the Centroidal Voronoi Orthogonal Decomposition (CVOD) \cite{du2003centroidal}.  Originally conceived as a model order reduction technique, CVOD is a generalized Centroidal Voronoi Tessellation (CVT) in which subspaces act as centroids.  Given a matrix $A \in \mathbb{R}^{m \times n}$, positive integers $r,k$, and a multi-index $d = (d_1,\ldots,d_k)^T \in \mathbb{N}^k$, define the energy functional
$$\mathcal{G}_1 = \sum_{i=1}^k \sum_{x \in V_i}||(I_m - \Theta_i)x||_s^2$$
Using the notation from above, $z_i = \Theta_ix$ and $D^2(a,b) = ||a - b||_2^2$.  The matrix $\Theta_i \in \mathbb{R}^{m \times m}$ is an orthogonal projector of $\mbox{rank}(\Theta_i) = d_i.$
CVOD seeks to solve the following:
$$\min_{\{(V_i,\Theta_i)\}_{i=1}^k}\mathcal{G}_1 \quad \mbox{such hat}$$
$$\Theta_i^2 = \Theta_i,\quad \mbox{rank}(\Theta_i) = d_i\quad i = 1,\ldots,k.$$

Minimization of $\mathcal{G}_1$ proceeds in a alternating fashion via the generalized Lloyd method \cite{du1999centroidal},\cite{du2003centroidal}, \cite{du2006convergence}.  After forming an initial partition (perhaps randomly), $\{V_i\}_{i=1}^k$, one determines the centroid for each $V_i$ by computing the matrix, $U_i\in \mathbb{R}^{m \times d_i}$, containing the top $d_i$ left singular vectors of $V_i$ for each $i$.  Once complete, the centroids are held fixed and each Voronoi set is updated by assigning each $x\in \Omega_A$ to sets via the following rule:
$$x \in V_i \iff ||(I_m - U_iU_i^T)x||_2^2 < ||(I_m - U_sU_s^T)x||_2^2\quad i \neq s.$$

Ties are broken by assigning points to the set with the smallest index.  We halt the algorithm once the difference between the energy functional $\mathcal{G}_1$ from consecutive iterations falls below a user-prescribed threshold, $\epsilon > 0$; see Algorithm \textsf{CVOD} for an overview.  Once the algorithm completes, the columns of the matrices $\{U_i\}_{i=1}^k$ can be used to form low-dimensional basis for the column space of $A$.  Applications include model order reduction, where the computed basis is paired with the Galerkin method in a fashion similar to the Proper Orthogonal Decomposition (POD) approach \cite{burkardt2006centroidal}.  We remark that the latter requires computation of a truncated SVD of the full data matrix.  CVOD, on the other hand, splits this task into more manageable sub-tasks.

\floatname{algorithm}{}
\renewcommand{\algorithmcfname}{}

\begin{algorithm}[H]
\renewcommand{\thealgorithm}{}      
\caption{\centering \textbf{Algorithm:} $\mathsf{CVOD}$}\label{alg:CVOD}
\SetAlgoRefName{\textsf{CVOD}}
\KwData{A matrix $A \in \mathbb{R}^{m \times n}$, with rank($A$) = $\rho$, a positive integer $r<\rho$, a positive integer $0<k\le m$, a multi-index of dimensions, $d = \{d_i\}_{i=1}^k$ with $\sum_{i=1}^k d_i = r$, and a positive tolerance parameter, $\epsilon$}
\KwResult{A collection, $\{V_i,U_i\}_{i=1}^{k}$, consisting of a column partitioning of $A$, and a set of lower dimensional representations of each partition. }


\vspace{1.5\baselineskip}

$\{V_i\}_{i=1}^k \leftarrow $ Randomly partition the columns of $A$\\
$j \leftarrow 1$\\
$\Delta^{j-1} \leftarrow \epsilon + 1$\\

\While{$\Delta^{j-1} > \epsilon$}{
    \hspace{0.4\baselineskip}$z_i \leftarrow 0 \in \mathbb{R}^{m},\quad i = 1,\ldots,k$\\   
    $(\{U_i\}_{i=1}^k,k) \leftarrow \mathsf{UpdateCentroidsFixed} \left (\{V_i\}_{i=1}^k, d \right )$\\    
    $ \{V_i\}_{i=1}^k \leftarrow \mathsf{FindVoronoiSets} \left (\{V_i\}_{i=1}^k,\{U_i\}_{i=1}^k,\{z_i\}_{i=1}^k\right )$\\
    
    $\mathcal{G}^{j} \leftarrow \sum_{i=1}^k\sum_{x \in V_i}||(I_m-U_iU_i^T)x||_2^2$\\
    \If{$j<2$}{
        $\Delta^{j} \leftarrow \Delta^{j-1}$\\
    }\Else{
        $\Delta^{j} \leftarrow \mathcal{G}^{j-1} - \mathcal{G}^{j}$\\
    }
    $j \leftarrow j + 1$\\
    
}
return $\{V_i,U_i\}_{i=1}^k$\\
\end{algorithm}

\subsection{Review of VQPCA}
The Vector Quantization Principal Component Analysis (VQPCA) algorithm is a nonlinear extension of PCA \cite{kambhatla1997dimension}.  It has been used in structural dynamics applications, including damage diagnosis \cite{kerschen2002non}, \cite{kerschen2005distortion}, \cite{yan2004structural}, as well as in clustering tasks for combustion simulations \cite{zdybal2022advancing}.  The corresponding energy functional is similar to that of \textsf{CVOD}:
$$\mathcal{G}_2 = \sum_{i=1}^k \sum_{x \in V_i}||\beta_i - (I_m - \Theta_i)x||_2^2.$$
The resulting objective is
$$\min_{\{(V_i,\Theta_i,\beta_i)\}_{i=1}^k} \mathcal{G}_2\quad \mbox{such that}$$
$$\Theta_i^2 = \Theta_i,\; \mbox{rank}(\Theta_i) = d_i,\; \sum_{i=1}^k d_i = r,$$ $$\beta_i \in \mathbb{R}^m,\quad \bigcup_{i=1}^k V_i = \Omega_A,\quad i = 1,\ldots,k,$$
where we assume the same input parameters as with \textsf{CVOD}.

The vectors $\beta_i$ can be found by taking partial derivatives and setting the result to zero:
$$\beta_i = \frac{1}{|V_i|} \sum_{x \in V_i}(I_m - \Theta_i)x = \bar{x}_i - \Theta_i \bar{x}_i,\quad i = 1,\ldots,k,$$  
where $\bar{x}_i = \frac{1}{|V_i|}\sum_{x \in V_i}x.$
With this in place, the solution is determined via alternating minimization.  Given some initial partition $\{V_i\}_{i=1}^k$ of the columns of $A$, we begin by determining the Voronoi set means, $\bar{x}_i$.  The centroids are found for each $i = 1,\ldots ,k$ by solving
$$\min_{\Theta_i}||(V_i - \bar{x}_i) - \Theta_i (V_i - \bar{x}_i)||_F^2 \quad \mbox{such that}$$
$$\Theta_i^2 = \Theta_i \in \mathbb{R}^{m \times m}\quad
\mbox{rank}(\Theta_i) = d_i.$$

This process is exactly the same as that for \textsf{CVOD}, only that the matrices $V_i$ have each been shifted by $\bar{x}_i$.  The resulting centroids consist of $U_i \in \mathbb{R}^{m \times d_i}$ that contain the top $d_i$ left singular vectors of each $V_i - \bar{x}_i$.   Once complete, the Voronoi sets are updated by assigning $x$ to $V_i$ if
$$||(I_m - U_iU_i^T)(x - \bar{x}_i)||_2^2 < ||(I_m  - U_jU_j^T)(x - \bar{x}_j)||_2^2,\; i \neq j.$$
 The \textsf{CVOD} algorithm can be modified for \textsf{VQPCA} by replacing $z_i\leftarrow 0$ with $z_i \leftarrow \bar{x}_i$ and $V_i$ with $V_i - \bar{x}_i$ in the call to \textsf{UpdateCentroidsFixed}, and using $(I_m - U_iU_i^T)(x - \bar{x}_i)$ instead of $(I_m - U_iU_i^T)x$ in the expression for $\mathcal{G}^j$.







\floatname{algorithm}{}
\begin{algorithm}[H]
\renewcommand{\thealgorithm}{}
\caption{\centering \textbf{Subroutine: }\textsf{FindVoronoiSets}}
\label{alg:findvoronoisets}
\KwData{A data matrix $A \in \mathbb{R}^{m \times n}$, with rank($A$) = $\rho$, a set of generalized centroids, $\{U_i\}_{i=1}^k,$ and a collection of mean vectors, $\{z_i\}_{i=1}^k$.}
\KwResult{$\{V_i\}_{i=1}^k$, where the $V_i$ form an updated partition of the columns of $A$.}


\vspace{1.5\baselineskip}

$\Omega \leftarrow $set of column vectors of $A$\\
$k \leftarrow $Number of centroids, $U_i$\\
$V_i \leftarrow \emptyset,\; i = 1,\ldots,k$\\

\For{$ x \in \Omega$}{
    \For{$i = 1,\ldots,k$}{
       $d_i \leftarrow \left | \left | (x - z_i) - U_iU_i^T(x - z_i) \right | \right |_2^2$\\
    }
    Assign $x$ to $V_i$ with $d_i < d_j\; i \neq j$\\
}

return $\{V_i\}_{i=1}^k$\\
\end{algorithm}

\section{Novel Adaptive Partitioning Strategies}
\subsection{Adaptive CVOD}

In this section we present a modification of the $\mathsf{CVOD}$ algorithm.  The problem to be solved is given by
$$\min_{\{(V_i,\Theta_i)\}_{i=1}^k}\mathcal{G}_1\quad \mbox{such that}$$
$$\Theta_i^2 = \Theta_i,\quad \sum_{i=1}^k\mbox{rank}(\Theta_i) = r,\quad \bigcup_{i=1}^kV_i = \Omega_A,$$
where $r$ is the target rank parameter for the CSSP problem.  Although the energy functional is the same as that for \textsf{CVOD}, $\mathsf{adaptCVOD}$ uses a different constraint on the centroids.  This reflects a more global approach to reducing the value of $\mathcal{G}_1$ at each iteration. Holding the $V_i$ fixed, the $\mathsf{CVOD}$ algorithm reduces the value of $\mathcal{G}_1$ by determining the optimal projector for each $V_i$, a process that is local in nature.  In other words, for fixed $V_i$, $\mathsf{CVOD}$ minimizes the following:
$$\sum_{i=1}^k \sum_{x \in V_i} ||(I_m - \Theta_i)x||_2^2 =  \sum_{i=1}^k||(I_m - \Theta_i)V_i||_F^2$$
over projectors $\Theta_i$ of $\mbox{rank}(\Theta_i) = d_i$.  The $\mathsf{adaptCVOD}$ algorithm solves this expression from a more global standpoint.  This is done by solving
\begin{eqnarray*}
    \min_{\Phi}||\mathsf{diag}(V_i) - \Phi \mathsf{diag}(V_i)||^2_F&s.t.& \Phi^2 = \Phi \in \mathbb{R}^{km \times km}\\
    & & \mbox{rank}(\Phi) = r.\\
\end{eqnarray*} 
The solution is given by $\mathsf{diag}(U_iU_i^T)$ where each $U_i \in \mathbb{R}^{m \times d_i}$ contains the top $d_i$ left singular vectors of $V_i$.  The rest of the alternating minimization process is the same as that in $\mathsf{CVOD}$.  

\begin{algorithm}[H]
\label{updatecentroidsfixed}
\renewcommand{\thealgorithm}{}
\caption{\centering \textbf{Subroutine: }\textsf{UpdateCentroidsFixed}}

\KwData{A column partition, $\{Y_i\}_{i=1}^k$ of a matrix $A \in \mathbb{R}^{m \times n}$ with rank($A$) = $\rho$ and a multi-index $d = (d_1 \dots d_k)$.}
\KwResult{$\{U_i\}_{i=1}^k$, $k$, where the $U_i$ form an updated set of $k$ generalized centroids and $k$ is the number of Voronoi sets.}


\vspace{1.5\baselineskip}

\For{$i = 1,\ldots,k$}{
    $\tilde{U}\Sigma W^T \leftarrow \mathsf{SVD}(Y_i)$\\
    $U_i \leftarrow \tilde{U}(:,1:d_i)\\$
}

return $(\{U_i\}_{i=1}^k,k)$\\
\end{algorithm}

When updating the centroids in $\mathsf{adaptCVOD}$, it may happen that no left singular vector from one or more of the $V_i$ contributes to the dominant $r$-dimensional subspace of $\mathsf{diag}(V_i)$.  For example, the rank $r$ left singular matrix of $\mathsf{diag}(V_i)$ could look like the following:
$$\left ( \begin{array}{cccc}
          U_1U_1^T & & &\\
           & \ddots & &\\
           & & U_{k-1}U_{k-1}^T & \\
           & & & 0\\
           \end{array}\right )$$

In this case, we allow the number of sets, $k$, to change in order to match the number of singular matrices from each of the $V_i$ that contribute to the rank $r$ SVD of $\mathsf{diag}(V_i)$; see subroutine \textsf{UpdateCentroidsAdapt}. The result is a data driven routine in which the dimension and number of the sets $V_i$ are allowed to vary over the course of the algorithm.  The resulting pseudocode is given by replacing the \textsf{UpdateCentroidsFixed} routine with \textsf{UpdateCentroidsAdapt} in the \textsf{CVOD} algorithm.

We remark that, although the algorithm is motivated by the work in \cite{du2003centroidal}, the inclusion of the $\mathsf{UpdateCentroidsAdapt}$ subroutine gives it a flavor of some projective clustering approaches (e.g., \cite{agarwal2004k} ).

\subsection{Adaptive VQPCA}
In this section, we extend our adaptive process to the \textsf{VQPCA} algorithm.  The energy functional remains the same,
$$\mathcal{G}_2 = \sum_{i=1}^k \sum_{x \in V_i}||\beta_i - (I_m - \Theta_i)x||_2^2,$$
and our minimization problem becomes:
$$\min_{\{(V_i,\Theta_i,\beta_i)\}_{i=1}^k} \mathcal{G}_2\quad \mbox{such that}$$
$$\Theta_i^2 = \Theta_i,\; \sum_{i=1}^k \mbox{rank}(\Theta_i) = r,\quad \beta_i \in \mathbb{R}^m,\quad \bigcup_{i=1}^k V_i = \Omega_A.$$
We note that the function $\mathcal{G}_2$ is reminiscent of multi-dimensional scaling \cite{hastie2009elements}. A similar minimization problem is used in \cite{kerschen2002non}, \cite{kerschen2005distortion}, but does not permit the Voronoi sets to adapt to the data.

As before, the alternating minimization process is repeated until the relative change in $\mathcal{G}_2$ between iterations falls below some chosen $\epsilon>0$.  The $\mathsf{CVOD}$ pseudocode can be modified to suit this process by making the following changes: 
\begin{enumerate}
    \item Replace $z_i \leftarrow 0$ with $z_i \leftarrow \bar{x}_i$, $i = 1,\ldots,k.$
    \item Replace \textsf{UpdateCentroidsFixed}   with \textsf{UpdateCentroidsAdapt}.
    \item Set the inputs for \textsf{UpdateCentroidsAdapt} to be $\{V_i - \bar{x}_i\}_{i=1}^k,\; r$.
    \item Replace $(I_m - U_iU_i^T)x$ with $(I_m - U_iU_i^T)(x - \bar{x}_i)$ in the expression for $\mathcal{G}^j$.
\end{enumerate}

\begin{algorithm}[H]
\label{alg:updatecentroidsadapt}
\renewcommand{\thealgorithm}{}
\caption{\centering \textbf{Subroutine: }$\mathsf{UpdateCentroidsAdapt}$}
\KwData{A column partition, $\{Y_i\}_{i=1}^k$ of a matrix $A \in \mathbb{R}^{m \times n}$, with rank($A$) = $\rho$, and a positive integer $r \le \rho$.}
\KwResult{$(\{U_i\}_{i=1}^{\tilde{k}},\tilde{k})$, where the $U_i$ form an updated set of $\tilde{k}$ generalized centroids}


\vspace{1.5\baselineskip}

\For{$i = 1,\ldots,k$}{
    $U_i^{(0)}\Sigma_{i}^{(0)}W_i^{(0)} \leftarrow \mathsf{SVD}(Y_i)$\\
    $S_i \leftarrow \mbox{ singular values of }\Sigma_i$\\
    $U_i^{(1)} \leftarrow \emptyset$\\
}
$S \leftarrow \mbox{Top r singular values of } diag(\Sigma_i)$\\

$\tilde{k} \leftarrow 0\\$
\For{$\sigma \in S$}{
    \For{$i = 1,\ldots,k$}{
        \If{$\sigma \in S_i$}{
            $U_i^{(1)} \leftarrow $Append corresponding column from $U_i^{(0)}$\\
            $\tilde{k} \leftarrow \tilde{k} + 1\\$
        }
    }
}

return $ \left ( \{U_i^{(1)}\}_{i=1}^{\tilde{k}},\tilde{k} \right)$\\
\end{algorithm}

\section{Partitioned DEIM with Adaptive Column Selection}
The result of each of the processes outlined in the previous section is a collection, $\{V_i, U_i\}_{i=1}^k$, which describes a partition of the columns of $A$ as well as low dimensional representations of each member in the partition.  The task now is to apply an existing CSSP algorithm to each $V_i$ and return a combined result.  Although nearly any CSSP algorithm will work, we use the \textsf{DEIM} \cite{sorensen2016deim} algorithm as an example. The result will be four algorithms that each combine \textsf{DEIM} with one of the column-partitioning algorithms from the previous section: \textsf{CVOD+DEIM}, \textsf{VQPCA+DEIM}, \textsf{adaptCVOD+DEIM}, and \textsf{adaptVQPCA+DEIM}. The last two represent new, adaptive routines for the column-selection task.

As mentioned earlier, to select $r$ columns from a matrix $A$ with rank $\rho$, the \textsf{DEIM} algorithm requires a target rank parameter $0<r\le \rho$ and a full column rank matrix $W \in \mathbb{R}^{n \times r}$, which usually consists of the top $r$ right singular vectors of $A$.  We will represent this operation by $\mathsf{DEIM}(A,r,W)=C$, where $C \in \mathbb{R}^{m \times r}$.  From \cite{sorensen2016deim}, we know \textsf{DEIM} returns linearly independent columns.  However, it is unclear if the cumulative result from applying \textsf{DEIM} to each $V_i$ will be a linearly independent set.  To ensure that this is the case, we perform the following (See the \textsf{PartionedDEIM} algorithm).  First, we sort the $V_i$ in ascending order by the ranks of their centroids; i.e, 
$$\{V_1,\ldots,V_k\} \iff \mbox{rank}(U_i) \le \mbox{rank}(U_{i+1}).$$

\floatname{algorithm}{}
\renewcommand{\algorithmcfname}{}
\begin{algorithm}[H]
\renewcommand{\thealgorithm}{}
\caption{\centering \textbf{Algorithm:} $\mathsf{Partitioned DEIM}$}
\label{alg:pDEIM}
\KwData{A column partition, $\{V_i\}_{i=1}^k$ of a matrix $A \in \mathbb{R}^{m \times n}$, with rank($A$) = $\rho$, a positive integer $r<\rho$, and a collection, $\{U_i\}_{i=1}^k,$ of $m \times d_i$ matrices containing the top $d_i$ left singular vectors of each $V_i$ with $\sum_{i=1}^k d_i = r.$}
\KwResult{$C \in \mathbb{R}^{m \times \tilde{r}}$, $\tilde{r} \le r$, such that $||A - CC^\dagger A||_F$ is small.}


\vspace{1.5\baselineskip}

$\{V_i\}_{i=1}^k \leftarrow $ Sort $V_i$ by $\mbox{rank}(U_i) \le \mbox{rank}(U_{i+1})$\\
$C_1 \leftarrow \mbox{Select }d_1 \mbox{ columns from } V_1 \mbox{ via DEIM}$\\
\hspace{0.4\baselineskip} $C \leftarrow C_1$\\

\For{$i = 2,\ldots,k$}{
\hspace{0.4\baselineskip}$QR \leftarrow \mbox{qr}(C)$ \tcp*{QR-decomposition}
    \hspace{0.4\baselineskip}$\tilde{V}_i\leftarrow (I - QQ^T)V_i$\\
    $\tilde{U}\tilde{\Sigma}\tilde{W}^T \leftarrow \mathsf{SVD}(( I - QQ^T)V_i)$\\
    $d_i \leftarrow \mbox{rank}(U_i)$\\
    $C_i \leftarrow \mathsf{DEIM}(V_i,d_i,\tilde{W}(:,1:d_i))$\\
    $C \leftarrow [C_1 \dots C_i]$\\
}
return $C$\\
\end{algorithm}

Next, we run \textsf{DEIM} on $V_1$ with $V_1^TU_1$ to select $d_1 = \mbox{rank}(U_1)$ columns from $v_1$:
$$C = \mathsf{DEIM}(V_1,d_1,V_1^TU_1).$$
Note that $V_1^TU_1$ is a scaled version of the matrix containing the top $d_1$ right singular vectors of $V_1$.  To select columns from $V_2$, we first compute the SVD of the matrix
$$(I - CC^\dagger)V_2 = \tilde{U}\tilde{\Sigma}\tilde{W}^T.$$
We then select $d_2 = \mbox{rank}(U_2)$ columns from $V_2$ with \textsf{DEIM} using $\tilde{W}(:,1:d_2)$; i.e.,  $\mathsf{DEIM}(V_2,d_2,\tilde{W}(:,1:d_2))$.  The resulting columns are appended to the matrix $C$ and the process repeats until $C$ has $r$ columns.  As shown by Lemma \ref{lemma1}, the final matrix $C \in \mathbb{R}^{m \times r}$ will have full column rank.

\begin{lemma}\label{lemma1}
    Let $A \in \mathbb{R}^{m \times n}$ with $\mbox{rank}(A) = \rho$, and let $C \in \mathbb{R}^{m \times r}$ be the result from applying \textsf{PartionedDEIM} to the output from any of the previously discussed partitioning algorithms. Then $C$ has full column rank.
\end{lemma}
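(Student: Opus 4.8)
The plan is to argue by induction on the loop index $i$ that the matrix $C = [C_1 \ \cdots \ C_i]$ produced after the $i$-th pass has full column rank, so that in particular the final $C$ does. The key structural fact to exploit is that at each step the columns $C_i$ are selected by \textsf{DEIM} from the \emph{deflated} matrix's right singular vectors: the matrix passed to \textsf{DEIM} at step $i$ is $\tilde W(:,1:d_i)$, where $\tilde U \tilde \Sigma \tilde W^T = (I - QQ^T)V_i$ and $QQ^T$ is the orthogonal projector onto $\mathrm{range}(C)$ (here $C$ is the accumulated matrix from steps $1,\dots,i-1$). So even though \textsf{DEIM} physically returns columns of $V_i$ (not of the deflated matrix), it is steered by the row-space information of $(I-QQ^T)V_i$.

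First I would handle the base case: $C_1 = \mathsf{DEIM}(V_1, d_1, V_1^T U_1)$. Since $V_1^T U_1$ is (a column scaling of) the top $d_1$ right singular vectors of $V_1$ and $d_1 = \mathrm{rank}(U_1) \le \mathrm{rank}(V_1)$, this is a full-column-rank $n_1 \times d_1$ input, so by the cited property of \textsf{DEIM} from \cite{sorensen2016deim}, $C_1$ has $d_1$ linearly independent columns. For the inductive step, assume $C \in \mathbb{R}^{m \times (d_1 + \cdots + d_{i-1})}$ has full column rank. Then $Q$ from $\mathrm{qr}(C)$ is an orthonormal basis for $\mathrm{range}(C)$, and $\tilde W(:,1:d_i)$ is full column rank provided $d_i \le \mathrm{rank}\big((I-QQ^T)V_i\big)$. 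I would establish this rank inequality as the crux of the argument (see below). Granting it, \textsf{DEIM} applied to $V_i$ with this input returns $d_i$ columns $C_i$ of $V_i$ that are linearly independent \emph{and}, more importantly, such that the matrix $C_i^{\mathrm{proj}} := (I-QQ^T)C_i$ has rank $d_i$: indeed, the \textsf{DEIM} selection guarantees that the corresponding $d_i \times d_i$ submatrix of $\tilde W^T$ (equivalently, the relevant rows of the deflated data) is nonsingular, which forces the selected columns of the deflated matrix to be independent. Since $\mathrm{range}(C_i^{\mathrm{proj}}) \subseteq \mathrm{range}(I-QQ^T)$ is complementary to $\mathrm{range}(C) = \mathrm{range}(Q)$, concatenating gives $[C \ C_i]$ of full column rank $d_1 + \cdots + d_i$, closing the induction. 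Summing over $i$ and using $\sum_i d_i = r$ gives the claim.

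The main obstacle is the rank inequality $d_i \le \mathrm{rank}\big((I - QQ^T)V_i\big)$, i.e. verifying that after deflating by the at most $d_1 + \cdots + d_{i-1}$ directions already captured, the set $V_i$ still has at least $d_i$ dimensions left to contribute. Here the sorting step — reordering so that $\mathrm{rank}(U_1) \le \cdots \le \mathrm{rank}(U_k)$ — and the provenance of the $U_i$ from the partitioning routines must be used. For the fixed-dimension routines ($\mathsf{CVOD}$, $\mathsf{VQPCA}$) one has $d_i = \mathrm{rank}(U_i) \le \mathrm{rank}(V_i)$ by construction, and $\mathrm{rank}\big((I-QQ^T)V_i\big) \ge \mathrm{rank}(V_i) - \mathrm{rank}(C) $; I would need to argue that the total deflation so far, $\sum_{j<i} d_j$, together with the way the centroid subspaces are nested/ordered, cannot exhaust more than $\mathrm{rank}(V_i) - d_i$ of those dimensions — the cleanest route is to observe that each previously selected block lives in (a neighborhood of) the span of an earlier, lower-rank centroid, and to bound $\mathrm{rank}(QQ^T V_i)$ accordingly; the sorting is exactly what makes this bound go through. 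For the adaptive routines ($\mathsf{adaptCVOD}$, $\mathsf{adaptVQPCA}$), the situation is actually cleaner because $\mathsf{UpdateCentroidsAdapt}$ only admits a singular vector of $V_i$ into $U_i$ when its singular value survives into the global top-$r$ of $\mathrm{diag}(\Sigma_i)$, so by construction the columns of $U_i$ are genuine left singular vectors of $V_i$ and $d_i \le \mathrm{rank}(V_i)$; moreover distinct blocks correspond to disjoint singular triples, which I expect makes the deflation accounting essentially automatic. I would present the fixed case in detail and remark that the adaptive case follows a fortiori. A secondary point to state carefully, but which is routine, is that \textsf{DEIM}'s selected-row submatrix being nonsingular does indeed transfer independence from the deflated data matrix to the selected original columns modulo $\mathrm{range}(Q)$ — this is immediate from the interpolation-projector structure in \cite{sorensen2016deim}.
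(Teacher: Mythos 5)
Your induction is structurally the same argument the paper gives: base case from the \textsf{DEIM} full-rank property, deflation of $V_{s+1}$ by the orthogonal projector onto $\mathrm{range}(C)$, the observation that the \textsf{DEIM}-selected rows of $\tilde W(:,1:d_{s+1})$ being nonsingular makes the selected columns of the deflated matrix independent, and the fact that these projected columns are complementary to $\mathrm{range}(Q)$, so concatenation preserves full column rank. Where you depart is in isolating, as ``the crux,'' the inequality $d_{s+1}\le \mathrm{rank}\bigl((I-QQ^T)V_{s+1}\bigr)$, and this is where your proposal has a genuine gap: the route you sketch does not establish it. ``Each previously selected block lives in (a neighborhood of) the span of an earlier, lower-rank centroid'' cannot support a rank bound --- rank is not stable under perturbation, and the selected columns of $V_j$ need not lie in $\mathrm{span}(U_j)$ at all, only near it when the partition is good. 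Moreover, the sorting step only orders the values $\mathrm{rank}(U_i)$; it says nothing about how the column spaces of different Voronoi sets intersect, so it does not by itself prevent the directions already captured in $C$ from exhausting all but fewer than $d_{s+1}$ dimensions of $\mathrm{span}(V_{s+1})$. Your claim that the adaptive case is ``essentially automatic'' also does not follow: the blocks of $\mathsf{diag}(V_i)$ are orthogonal in $\mathbb{R}^{km}$, but that gives no independence relation among the subspaces $\mathrm{span}(U_i)\subset\mathbb{R}^m$, which is what the deflation accounting would need.

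To be fair, you have put your finger on exactly the point the paper itself passes over: its proof simply asserts that $BT_{s+1}$ has full column rank (and its displayed inequality has a dimensional slip, $\|Bx\|_2$ versus $\|BT_{s+1}x\|_2$), which is valid precisely when $\mathrm{rank}\bigl((I-QQ^T)V_{s+1}\bigr)\ge d_{s+1}$, i.e.\ the same unproven condition you flag. So your skeleton matches the published argument, and your extra honesty is a virtue, but as written the proposal does not close the induction: either the rank condition must be proved from properties of the partitioning routines (not merely from $d_i\le\mathrm{rank}(V_i)$ and the sorting), or the statement should be weakened to say that $C$ has full column rank whenever the deflated Voronoi blocks retain rank at least $d_i$ at every step (with the algorithm returning $\tilde r\le r$ columns otherwise, as the \textsf{PartitionedDEIM} output specification already hints).
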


\begin{proof} 
    Let $(\{V_i\}_{i=1}^{k},\{U_i\}_{i=1}^k)$ be in the input to \textsf{PartionedDEIM}, where the $V_i$ denote the Voronoi sets and $U_i$ are the centroids.  Define $d_i = \mbox{rank}(U_i), i =1,\ldots,k$, and assume the Voronoi sets have been ordered as in  \textsf{PartitionDEIM}.    We may write $C = [C_1 \dots C_k],$ where $C_i \in \mathbb{R}^{m \times d_i}$ contains those columns of $C$ that belong to $V_i$.  Since $C_1$ results from applying \textsf{DEIM} to $V_1$, we know that it has full column rank \cite{sorensen2016deim}.  Proceeding by induction, suppose $C = [C_1 \dots C_s]$, $1<s<k$ has been constructed and has full column rank.  We next consider $B = (I - QQ^T)V_{s+1} \in \mathbb{R}^{m \times n_{s+1}}$, where $QR = C$ is the QR-decomposition of $C$.  Let $T_{s+1} \in \mathbb{R}^{n_{s+1} \times d_{s+1}}$ be the \textsf{DEIM} selection matrix.  Then $BT_{s+1}$ has full column rank, and each column is linearly independent with respect to the columns of $C$.  Now suppose that $V_{s+1}T_{s+1}$ does not have full column rank.  Then there exists $x \neq 0$ in $\mathbb{R}^{n_{s+1}}$ such that $V_{s+1}T_{s+1}x = 0$.  But this implies
    $$||Bx||_2 = ||(I - QQ^T)V_{s+1}x||_2 \le ||V_{s+1}T_{s+1}x||_2 = 0,$$
    a contradiction.  Thus, the $V_{s+1}T_{s+1}$ has full column rank
\end{proof}

\section{Numerical Experiments}
In this section we investigate the performance of the partitioning/\textsf{DEIM} algorithms (\textsf{CVOD+DEIM}, \textsf{VQPCA+DEIM}, \textsf{adaptCVOD+DEIM}, and \textsf{adaptVQPCA+DEIM}) on three data sets using the original \textsf{DEIM} as a benchmark.

The first data set, referred to hereafter as SNN1E3, uses sparse non-negative matrices (SNN) of the form:
$$A = \sum_{i=1}^l\frac{2}{i}x_iy_i^T + \sum_{i=l+1}^n\frac{1}{i}x_iy_i^T,$$
where $x_i \in \mathbb{R}^m$, $y_i \in \mathbb{R}^n$ are random sparse vectors generated via Matlab's \cite{MATLAB} 'sprand' command. We use the following paramters:
$$m = n = 1000,\quad l = 100,\; \mbox{density} = 0.0125.$$
This last parameter is required by 'sprand', and controls the sparseness of the output.  We remark that similar test data is used in \cite{dong2021simpler} and \cite{sorensen2016deim}.
The second data set is $A \in \mathbb{R}^{60000 \times 784}$ and consists of MNIST training data with images set as rows.  We refer to this data set as MNIST.

\begin{figure}[H]
\centering
\includegraphics[width=.6\linewidth]{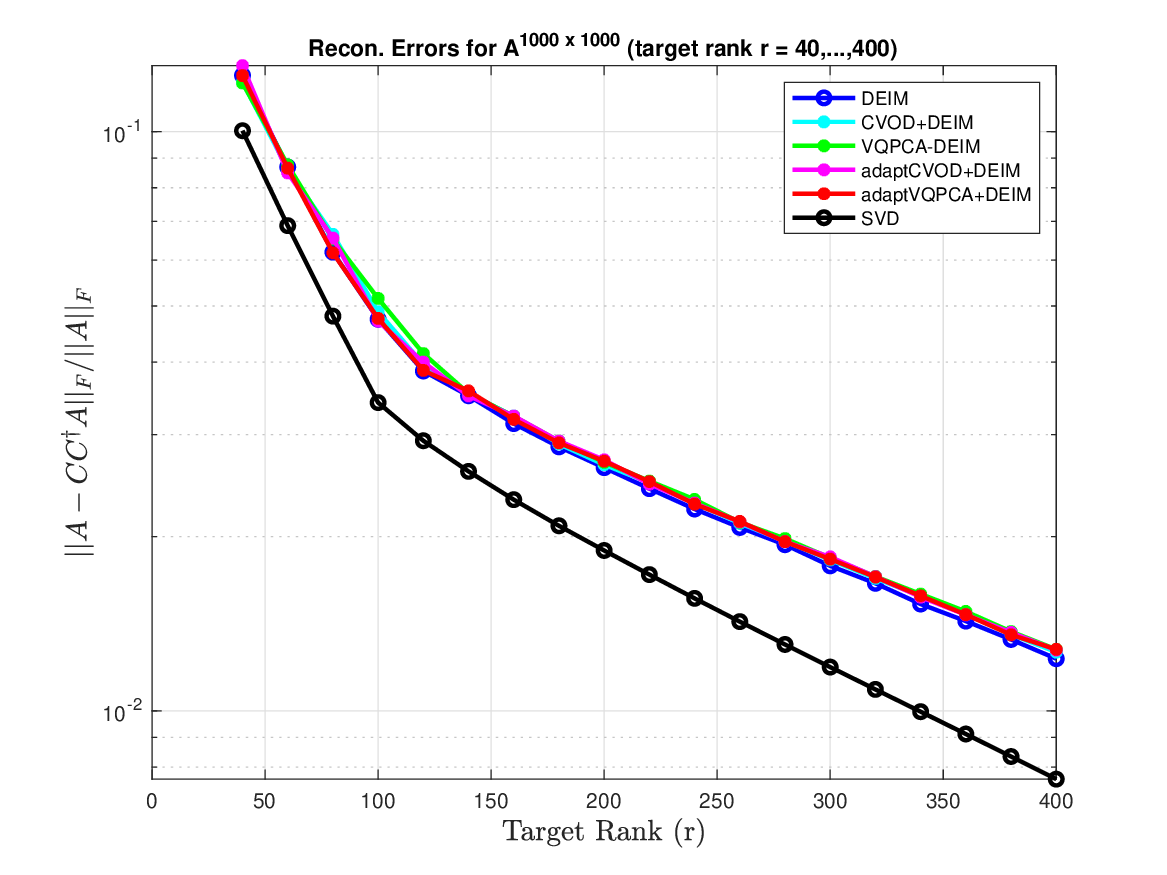}
\caption{Normalized reconstruction errors for \text{DEIM} with and without partitioning on the sparse SNN1E3 data.}
\label{fig:snn1e3}
\end{figure}

\textbf{Algorithm Settings and Metrics.}  
On the SNN1E3 data set, we consider ranks $r = 40,60,\ldots,400$ with $k = 20$ Voronoi sets.  Tests on the MNIST use ranks $r = 30,40,\ldots,150$ with $k = 5,10$ Voronoi sets.  In all cases, we set the stopping parameter to $\epsilon = 0.1$ and the multi-index values to $d_i = \frac{r_i}{k}$. For each rank, we measure the normalized reconstruction error
$$\frac{||(I - CC^\dagger)A||_F}{||A||_F}.$$

We use the same initial partitions for each of the algorithms.  In addition, we reduce the dimension of each data set for each rank, $r$, by left-multiplying $A\in \mathbb{R}^{m \times n}$ by an $r \times m$ matrix consisting of independent Gaussians with mean zero and standard deviation $r^{-1/2}$ \cite{dong2021simpler}.  
Our results are shown in Figures \ref{fig:snn1e3}-\ref{fig:mnist}.

\section{Analysis and Discussion}
Figures \ref{fig:snn1e3} - \ref{fig:mnist} show that, in general, the performance of the partitioned-based \textsf{DEIM} algorithms are on par, and sometimes better, than the original \textsf{DEIM} algorithm. In addition, it appears that the size of the partition used (i.e., the number of Voronoi sets) has a weak effect on the reconstruction errors.  This suggests benefits are to be had from using a partition-based alternative to traditional column-selection/CUR decomposition algorithms on large data sets \cite{bui2005model}; e.g., parallelization and handling of large clusters \cite{zdybal2022advancing}. Though somewhat counter-intuitive, this phenomenon is expected given the following.

\begin{figure}[H]
\centering
\includegraphics[width=.6\linewidth]{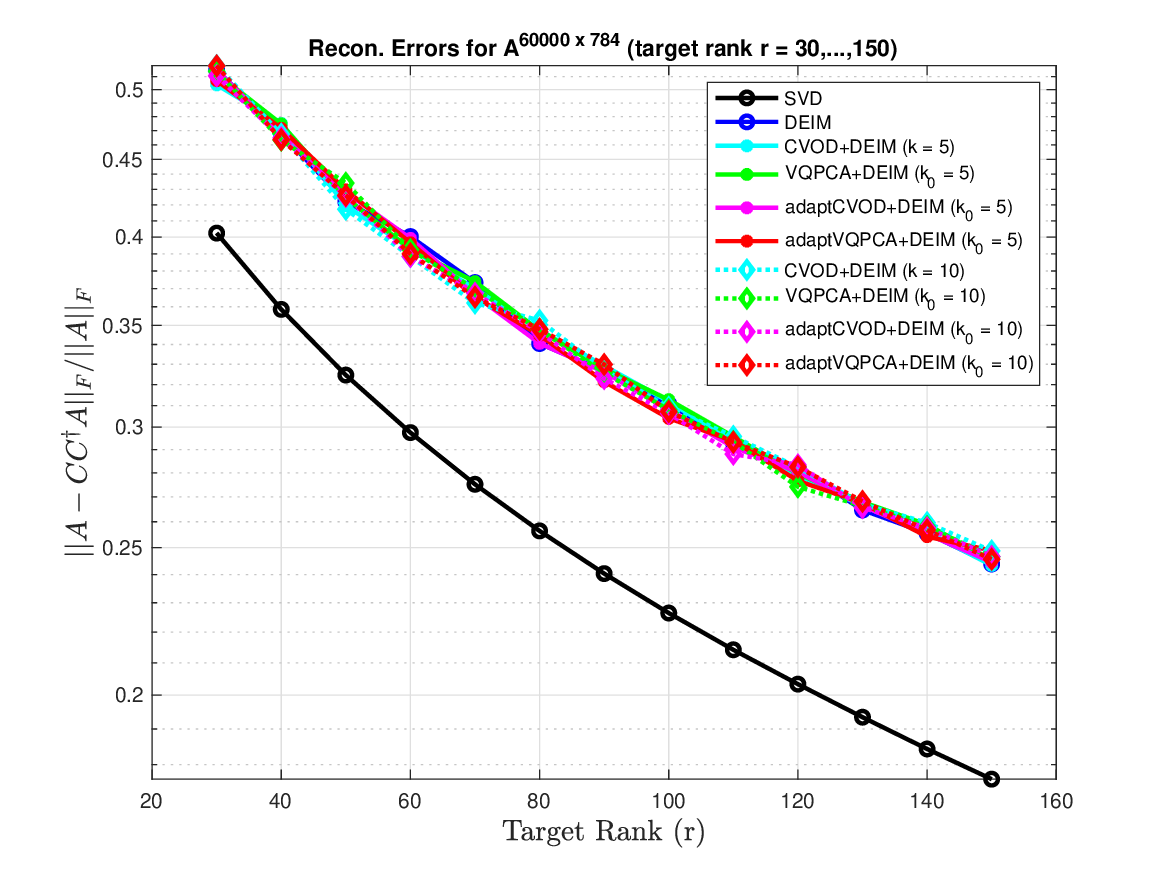}
\caption{Normalized reconstruction errors for \text{DEIM} with and without partitioning on the MNIST data set using $k = 5,10$ Voronoi sets.}
\label{fig:mnist}
\end{figure}

\begin{lemma}\label{lemma2}
Let $A \in \mathbb{R}^{m \times n}$ with $\mbox{rank}(A) = \rho$, and let $0 < r < \rho$ be a desired target rank.  Let $C \in \mathbb{R}^{m \times r}$ be the matrix resulting from any of the partition-based \textsf{DEIM} algorithms with an initial column partition of size $k$ and multi-index $d = (d_1 \dots d_k)$, with $d_i = \lfloor r/k \rfloor$.  
If $\{V_i\}_{i=1}^{\tilde{k}}$ is the final column partition with $\tilde{k}\le k$, then
$$||(I - CC^\dagger)A||_F\le \sqrt{2\tilde{k}\gamma_C}||A - A_r||_F,$$
    where $\gamma_C = \max_i||(I - C_iC_i^\dagger)V_i||_F^2$, $C_i \in \mathbb{R}^{m \times \tilde{d}_i}$ contains the columns of $C$ selected from $V_i$, and $A_r \in \mathbb{R}^{m \times n}$ denotes the best rank $r$ approximation to $A$ given by the truncated SVD.
\end{lemma}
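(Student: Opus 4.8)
\section*{Proof proposal for Lemma \ref{lemma2}}

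The plan is to exploit that the final Voronoi sets $\{V_i\}_{i=1}^{\tilde k}$ partition the columns of $A$, so that the global reconstruction error splits cluster-wise, and then to trade the global projector $CC^\dagger$ for the cheaper cluster-local projectors $C_iC_i^\dagger$ by a range-inclusion argument. First I would reindex the columns of $A$ so that $A = [\,V_1\ \cdots\ V_{\tilde k}\,]\Pi$ for a suitable permutation matrix $\Pi$ (the $V_i$ being the partition that survives the adaptive shrinkage $\tilde k \le k$), and likewise write $C = [\,C_1\ \cdots\ C_{\tilde k}\,]$, where $C_i$ collects the columns of $C$ drawn from $V_i$; that this splitting is legitimate (disjoint column supports, $C$ full column rank) is exactly what Lemma \ref{lemma1} and the QR-deflation inside \textsf{PartitionedDEIM} provide. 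Since the Frobenius norm is invariant under column permutations,
$$\|(I - CC^\dagger)A\|_F^2 = \sum_{i=1}^{\tilde k}\|(I - CC^\dagger)V_i\|_F^2 .$$

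The key step is a monotonicity observation. Because $\mbox{range}(C_i) \subseteq \mbox{range}(C)$, the residual subspace satisfies $\mbox{range}(C)^\perp \subseteq \mbox{range}(C_i)^\perp$, and for nested orthogonal projectors the one onto the smaller subspace has the smaller operator (indeed $I - C_iC_i^\dagger - (I - CC^\dagger) \succeq 0$). Hence for every block $\|(I - CC^\dagger)V_i\|_F \le \|(I - C_iC_i^\dagger)V_i\|_F$, and by the definition of $\gamma_C$ this is at most $\sqrt{\gamma_C}$. Summing over the $\tilde k$ Voronoi sets already yields $\|(I - CC^\dagger)A\|_F^2 \le \tilde k\,\gamma_C$, which is the quantitative heart of the estimate.

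To bring in $\|A - A_r\|_F$ together with the constant $\sqrt 2$, I would split $A = A_r + (A - A_r)$, restrict both summands to each Voronoi block (writing $B_i$ for the columns of $A_r$ sitting in the positions of $V_i$, so that $\sum_i\|V_i - B_i\|_F^2 = \|A - A_r\|_F^2$), and then estimate each block via the triangle inequality combined with $(a+b)^2 \le 2a^2 + 2b^2$ and the fact that $I - C_iC_i^\dagger$ is a Frobenius-norm contraction: the $(A - A_r)$-contribution is controlled directly by $\|A - A_r\|_F$ after re-summing over $i$, while the $A_r$-contribution is absorbed because the cluster-local selection already reproduces the dominant content of $V_i$ up to the residual recorded in $\gamma_C$. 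Re-assembling the per-block estimates and taking a square root then gives the asserted bound $\|(I - CC^\dagger)A\|_F \le \sqrt{2\tilde k\gamma_C}\,\|A - A_r\|_F$.

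The main obstacle is precisely this last reconciliation: cleanly bounding the interaction between the \emph{global} truncated SVD $A_r$ and the \emph{cluster-local} \textsf{DEIM} selections, so that the ``$A_r$-part'' is genuinely subsumed by $\gamma_C$ and $\|A - A_r\|_F$ rather than leaving an uncontrolled additive term. Two bookkeeping points also need care: one must verify that the adaptive reduction to $\tilde k \le k$ sets performed by \textsf{UpdateCentroidsAdapt} does not let vanishing or merged clusters inflate the factor $\sqrt{2\tilde k}$, and one must check that the choice $d_i = \lfloor r/k\rfloor$ (versus the per-iteration ranks $\tilde d_i = \mbox{rank}(U_i)$ actually used by \textsf{PartitionedDEIM}) is consistent with the block sizes appearing in $\gamma_C$, so that $C_i \in \mathbb{R}^{m\times\tilde d_i}$ with $\sum_i \tilde d_i \le r$ throughout.
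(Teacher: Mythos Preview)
Your first two paragraphs reproduce the paper's argument exactly: split $\|(I-CC^\dagger)A\|_F^2$ over the Voronoi blocks, use the range inclusion $\mbox{range}(C_i)\subseteq\mbox{range}(C)$ to replace $CC^\dagger$ by $C_iC_i^\dagger$, and bound by $\tilde k\,\gamma_C$. That is the entire substantive content of the paper's proof.

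Where you diverge is in the last step. The paper does \emph{not} split $A=A_r+(A-A_r)$ or do any block-wise triangle inequality to introduce $\|A-A_r\|_F$. Instead, having reached $\|(I-CC^\dagger)A\|_F^2\le\tilde k\,\gamma_C$, it simply multiplies the right side by the factor $(1+\mathcal{E}_r^2)\ge 1$, where $\mathcal{E}_r=\|A-A_r\|_F$, and then asserts $(1+\mathcal{E}_r^2)\le 2\mathcal{E}_r^2$ to conclude. So the route you are struggling with---reconciling the global $A_r$ with the local selections---is never attempted in the paper; the appearance of $\|A-A_r\|_F$ on the right is purely cosmetic. Your instinct that this coupling is the ``main obstacle'' is in a sense correct: the paper sidesteps it rather than resolving it.

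Note, however, that the paper's shortcut tacitly assumes $\mathcal{E}_r\ge 1$ (otherwise $(1+\mathcal{E}_r^2)/\mathcal{E}_r^2>2$ and the passage to $2\tilde k\,\gamma_C$ fails). If you want to match the paper, just insert this trivial multiplication; if you want a bound that is genuinely multiplicative in $\|A-A_r\|_F$ without a scale assumption, you would indeed need a different argument, and the sketch in your third paragraph does not close that gap as written.
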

\begin{proof}
Let $A$, $C$, and $\{V_i\}_{i=1}^{\tilde{k}}$ be given as in the statement of the lemma.  Write $C = [C_1 \dots C_{\tilde{k}}]$, where the columns of $C_i \in \mathbb{R}^{m \times \tilde{d}_i}$ are those from $C$ that belong to $V_i$. Let $A_r \in \mathbb{R}^{m \times n}$ denote the best rank $r$ approximation to $A$ given by the truncated SVD, and define $\mathcal{E}_r = ||A - A_r||_F$.  We have
\begin{eqnarray*}
    ||(I - CC^\dagger)A||_F^2 &=& \sum_{i=1}^{\tilde{k}}||(I-CC^\dagger)V_i||_F^2\\
    &\le& \tilde{k}\max_i ||(I - C_iC_i^\dagger)V_i||_F^2\\
    &\le&\tilde{k}\max_i ||(I - C_iC_i^\dagger)V_i||_F^2(1 + \mathcal{E}^2_r).\\
\end{eqnarray*}
The first inequality follows since $CC^\dagger$ and $C_iC_i^\dagger$ are both orthogonal projectors, and $\mbox{span}(C_iC_i^\dagger) \subset \mbox{span}(CC^\dagger).$ 
This implies
$$\frac{||(I - CC^\dagger)A||_F^2}{\mathcal{E}_r^2} \le 2\tilde{k}\max_i ||(I - C_iC_i^\dagger)V_i||_F^2.$$

If we let $\gamma_C = \max_i ||(I - C_iC_i^\dagger)V_i||_F^2$, a value related to the worst local approximation of all the Voronoi sets, multiplying both sides by $\mathcal{E}_r^2$ and taking square roots gives the desired result. 
\end{proof}

Thus, the errors (worst case) increase sublinearly with the size of the final partition, $\tilde{k}$.  We remark that this argument is not restricted \textsf{DEIM} algorithm; it is applicable to \emph{any} column subset selection algorithm.  In terms of a CUR decomposition, the previous implies the following.

\begin{theorem*}
 Let $A \in \mathbb{R}^{m \times n}$ with $\mbox{rank}(A) = \rho$, and let $0 < r < \rho$ be a desired target rank.  Suppose $C \in \mathbb{R}^{m \times r}$ and $R \in \mathbb{R}^{r \times n}$ are the result from applying any of the partition-based \textsf{DEIM} algorithms on $A$ and $A^T$ respectively, each with an initial partition of size $k$ and multi-index defined as in Lemma \ref{lemma2}.  If $\{V_i\}_{i=1}^{k_1}$ and $\{W_j\}_{j=1}^{k_2}$ denote the respective final column and row partitions with $k_1,k_2 \le k$, then
 $$||A - CUR||_F \le\left ( \sqrt{2k_1\gamma_C} + \sqrt{2k_2\gamma_R} \right ) ||A - A_r||_F,$$
 where
 $$\gamma_C = \max_i ||(I - C_iC_i^\dagger)V_i||_F^2,\quad \gamma_R = \max_i ||W_j(I - R_j^\dagger R_j)||_F^2$$
 are from Lemma \ref{lemma2} and $A_r \in \mathbb{R}^{m \times n}$ denotes the best rank $r$ approximation to $A$ given by the truncated SVD.
\end{theorem*}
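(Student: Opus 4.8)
The plan is to work with the standard CUR core matrix $U = C^\dagger A R^\dagger$, which is also the minimizer of $||A - CUR||_F$ over all $r\times r$ matrices, so any bound established for this $U$ is automatically valid for the optimal choice as well. With this $U$ we have $CUR = CC^\dagger A R^\dagger R = PAQ$, where $P = CC^\dagger$ and $Q = R^\dagger R$ are the orthogonal projectors onto $\mbox{span}(C)$ and onto the row space of $R$. First I would use the identity
$$A - PAQ = (I-P)A + PA(I-Q)$$
together with the triangle inequality for $||\cdot||_F$ and the bound $||PA(I-Q)||_F \le ||P||_2\,||A(I-Q)||_F = ||A(I-Q)||_F$, which holds because a nonzero orthogonal projector has spectral norm $1$. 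This reduces the theorem to estimating $||(I - CC^\dagger)A||_F$ and $||A(I - R^\dagger R)||_F$ separately.

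The first term is exactly what Lemma \ref{lemma2}, applied to $A$ with its final column partition $\{V_i\}_{i=1}^{k_1}$, controls: $||(I - CC^\dagger)A||_F \le \sqrt{2k_1\gamma_C}\,||A - A_r||_F$. For the second term I would pass to the transpose. Since $\mbox{rank}(A^T) = \rho$, the matrix $R^T \in \mathbb{R}^{n\times r}$ is the output of the same partition-based \textsf{DEIM} procedure run on $A^T$, whose final Voronoi sets are the sets $W_j$ of the row partition. Using $(R^T)^\dagger = (R^\dagger)^T$ and the symmetry of $R^\dagger R$, one has $I_n - R^T(R^T)^\dagger = I_n - R^\dagger R$, hence $||(I_n - R^\dagger R)A^T||_F = ||A(I - R^\dagger R)||_F$. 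Applying Lemma \ref{lemma2} to $A^T$ and using $||A^T - (A^T)_r||_F = ||A - A_r||_F$ then gives $||A(I - R^\dagger R)||_F \le \sqrt{2k_2\gamma_R}\,||A - A_r||_F$, and summing the two estimates yields the claimed inequality.

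The one point that needs real care, and hence the main obstacle, is checking that the constant $\gamma_R = \max_j||W_j(I - R_j^\dagger R_j)||_F^2$ in the statement is precisely the constant that Lemma \ref{lemma2} produces when run on $A^T$. Writing $\tilde{W}_j$ and $\tilde{R}_j$ for the Voronoi set and the \textsf{DEIM}-selected columns in the transposed problem, we have $\tilde{W}_j = W_j^T$ and $\tilde{R}_j = R_j^T$, so the per-set residual appearing in Lemma \ref{lemma2} satisfies $||(I - \tilde{R}_j\tilde{R}_j^\dagger)\tilde{W}_j||_F = ||(W_j(I - R_j^\dagger R_j))^T||_F = ||W_j(I - R_j^\dagger R_j)||_F$, once more by symmetry of $R_j^\dagger R_j$ and invariance of $||\cdot||_F$ under transposition. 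With this identification in hand, the remainder is just the triangle inequality and two applications of Lemma \ref{lemma2}.
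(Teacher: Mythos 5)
Your proposal is correct and follows essentially the same route as the paper: take $U = C^\dagger A R^\dagger$, split $A - CC^\dagger A R^\dagger R$ via the triangle inequality into $(I - CC^\dagger)A$ and $CC^\dagger A(I - R^\dagger R)$, and apply Lemma \ref{lemma2} to the column problem and to $A^T$ for the row problem. You simply spell out details the paper leaves implicit (the projector bound $\|CC^\dagger A(I-R^\dagger R)\|_F \le \|A(I-R^\dagger R)\|_F$ and the transpose identification of $\gamma_R$), which is fine.
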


\begin{proof}
   We have
   \begin{eqnarray*}
||A - CUR||_F &=& ||A - CC^\dagger A R^\dagger R||_F\\
     &\le& ||(I - CC^\dagger)A||_F + ||CC^\dagger A(I - R^\dagger R)||_F\\
    &\le & \left ( \sqrt{2k_1\gamma_C} + \sqrt{2k_2\gamma_R} \right ) ||A - A_r||_F,\\
\end{eqnarray*}
where $\gamma_R$ is analogous to $\gamma_C$ but results from processing the rows of $A$.
\end{proof}

\begin{figure}[H]
\centering
\includegraphics[width=.6\linewidth]{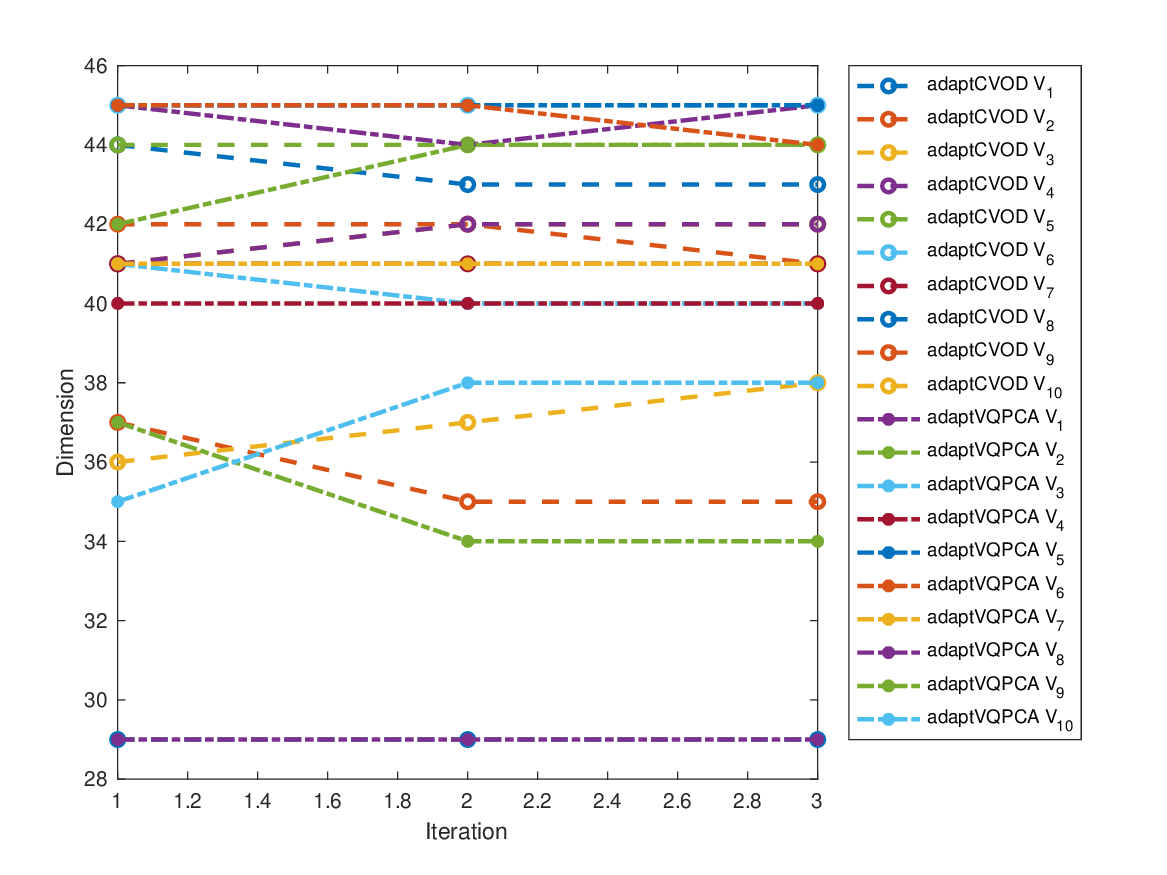}
\caption{Evolution of the centroid dimensions for each Voronoi set resulting from \textsf{adaptCVOD} and \textsf{adaptVQPCA} applied to the MNIST data.  In both cases, $k = 10$, $\mbox{rank} = 400$, and $\epsilon = 0.01$. }
\label{fig:evolvingDims}
\end{figure}


\begin{figure}[H]
\centering
\includegraphics[width=.6\linewidth]{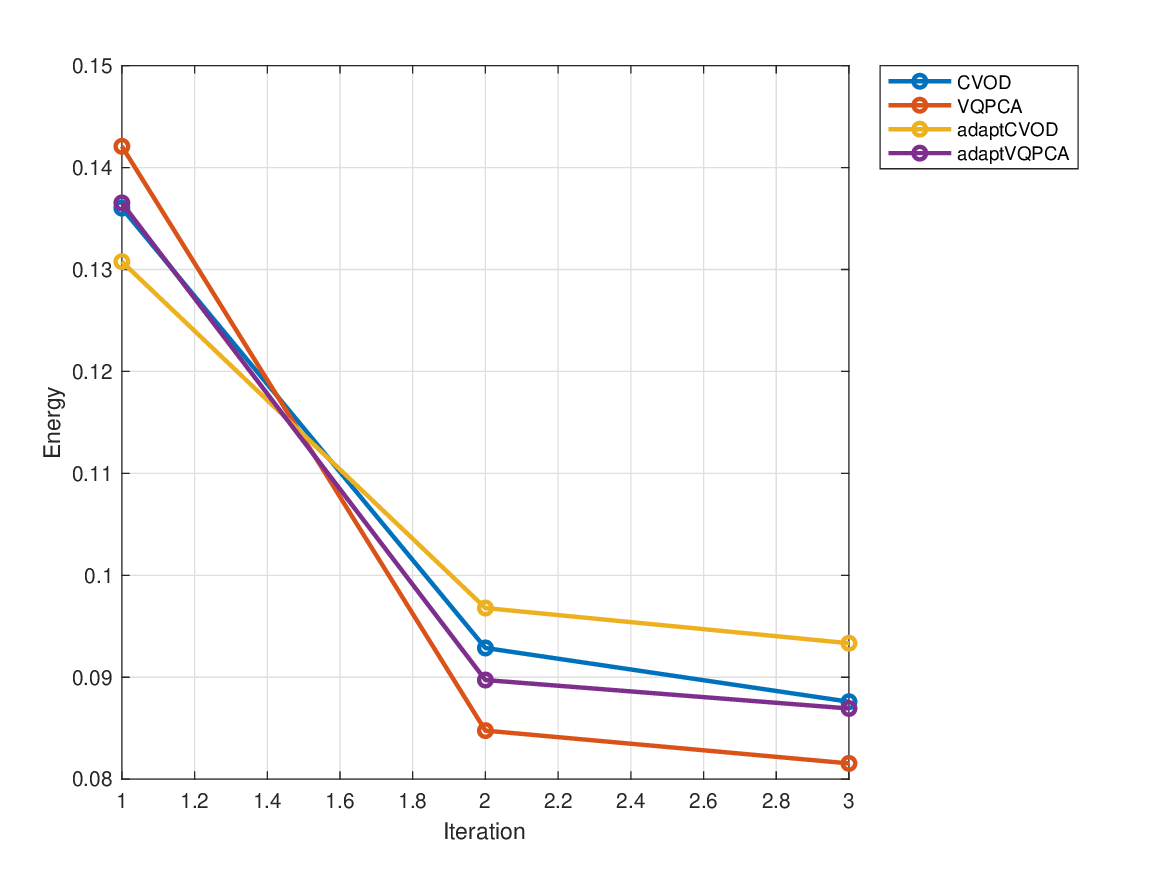}
\caption{Energy profiles from all of the partition-based algorithms on the MNIST data set.  The parameters used are $k = 10$, $\mbox{rank} = 400$, and $\epsilon = 0.01$. }
\label{fig:MNISTinfo}
\end{figure}


		
		

Here, we note that, in its present form, the dependence of Lemma \ref{lemma2} on the choice of partitioning algorithm is only somewhat apparent through the $\gamma_C$ term, but is still coupled with \textsf{DEIM}.  Our investigation into the relationship between the choice of partitioning algorithm and the resulting CSSP solution will be treated in another paper.

In addition to exhibiting similar reconstruction errors, the partition-based algorithms also possess similar energy (objective function) profiles.  Figure \ref{fig:energies} shows the energy profiles for each of the algorithms on the MNIST data set with $k = 10$, $\mbox{rank} = 400$, and $\epsilon = 0.01$.  We remark that although the \textsf{VQPCA}-based routines attain the lowest final energies, it is possible that the resulting partitions are less informative than their \textsf{CVOD} counterparts \cite{tadavani2012low}.  Figure \ref{fig:evolvingDims} shows how the Voronoi set dimensions evolve on the MNIST data with the same parameters.  This type of data-driven process could be helpful in revealing hidden information in the data as well as highlight data resulting from different processes \cite{zdybal2022advancing}.

For an $m \times n$ matrix, target rank parameter, $r$, and $k$ initial Voronoi sets, each of the partitioned-based \textsf{DEIM} algorithms (including the post-processing computations) scale as
$$\mathcal{O}(nmr + kmn_{max}^2).$$
Here, $n_{max}$ refers to the largest cardinality of the Voronoi sets.  Applying a sketching matrix of size $r \times m$ reduces this to
$$\mathcal{O}(k\min\{rn_{max}^2,r^2n_{max}\} + nr^2),$$
which does not include the cost of constructing and applying the sketch.  Both expressions are on par with a number of existing column selection algorithms; see \cite{dong2021simpler} for a listing that includes the costs of sketching.

\section{Conclusion}
In this article we describe a general process for reducing the column-subset selection problem (CSSP) to a collection of smaller sub-problems.  This is accomplished by first applying a partitioning algorithm to the columns of the matrix in question.  For this task, we use the \textsf{CVOD} and \textsf{VQPCA} algorithms, as well as adaptive extensions of these routines.   Referred to as \textsf{adaptCVOD} and \textsf{adaptVQPCA}, these last algorithms use a data-driven process to select the number of sets included in the partition, and the dimensions of each set.  The second step is to apply an existing CSSP algorithm to each partition and combine the results; we consider the \textsf{DEIM} algorithm in this article.  We show theoretically and empirically that the resulting column selection and CUR decomposition solutions are competitive with \textsf{DEIM} in terms of accuracy and complexity (without considering parallelizaton).  The partitioning algorithms require few input parameters, and enjoy worst case error bounds that scale as $\sqrt{k}$ for a fixed problem, where $k$ is the number of Voronoi sets. The result from combining CSSP and partitioning algorithms is a process that is well-suited for parallelization.  This can be advantageous when encountering large data sets, especially those for which computation of an SVD is prohibitive.  Potentially attractive uses include weather 
applications (e.g., data assimilation) where such data sets are common.
Our future work includes optimizing the current reconstruction error bounds, applying our results to model order reduction applications, and examining the clustering ability of different partitioning algorithms.  In particular, we intend to examine other so-called distortion measures used to assign points to their respective Voronoi sets.  Lastly, we plan to investigate further the relationship between the optimality properties of the partitioning algorithms and the resulting column-selection solutions.






\bibliographystyle{unsrt}
\bibliography{references}

\end{document}